\documentclass{amsart}

\usepackage{xcolor}
\usepackage{setspace}

\newtheorem{theorem}{Theorem}[section]
\newtheorem{prop}[theorem]{Proposition}
\newtheorem{lem}[theorem]{Lemma}

\newtheorem{defn}[theorem]{Definition}

\DeclareMathOperator*{\esssup}{ess\,sup}

\DeclareMathOperator*{\essinf}{ess\,inf}

\DeclareMathOperator{\osc}{{\rm osc}}

\numberwithin{equation}{section}
\title[Variational P\'{o}lya-Seg\H{o} principle]{A variation on the P\'{o}lya-Seg\H{o} principle in one dimension}

\author{Martin Lind}
\address{Department of Mathematics and Computer Science, Karlstad University, Universitetsgatan 2, 65188 Karlstad, Sweden}
\email{martin.lind@kau.se}

\author{Sorina Barza}
\address{Department of Mathematics and Computer Science, Karlstad University, Universitetsgatan 2, 65188 Karlstad, Sweden}
\email{sorina.barza@kau.se}

\subjclass[2020]{26A45, 46E35}

\keywords{non-increasing rearrangement, P\'{o}lya–Szeg\H{o} principle, bounded $p$-variation, fractional smoothness, modulus of continuity}

\begin{document}

\begin{abstract}
    We establish a one-dimensional P\'{o}lya–Szeg\H{o} principle for the Riesz $(p,\alpha)$-variation $\mathcal{V}_p^\alpha$, a family of functionals that interpolates between Wiener $p$-variation and the Sobolev seminorm $\|f'\|_{L^p}$. More precisely, for every measurable function $f$, we prove that its non-increasing rearrangement $f^*$ satisfies 
    $$
    \mathcal{V}_p^\alpha(f^*)\le\mathcal{V}_p^\alpha(f)
    $$ 
    for all $1\le p<\infty$ and $0\le\alpha\le 1-1/p$.
    This result contains and extends the classical variation-diminishing property of rearrangements from Sobolev spaces to a scale of spaces admitting fractional smoothness and, in particular, applies to functions that are nowhere differentiable. Our methods also yield a sharp rearrangement inequality for a modulus of continuity defined via $p$-variation, providing an analogue of a problem posed by Ul'yanov. Finally, we sketch how our inequality can be useful in the study of nonlinear Fredholm integral equations.
\end{abstract}
\maketitle

\section{Introduction}

Let $I=[a,b]\subset\mathbb{R}$ and let $f:I\rightarrow\mathbb{R}$. For any subset $E\subset I$ define
\begin{equation}
    \nonumber
    {\rm osc}(f;E)=\sup_{x\in E}f(x)-\inf_{x\in E}f(x).
\end{equation}
Denote also $|E|$ the Lebesgue measure of the set $E$.
\begin{defn}
    Let $1\le p<\infty$ and 
    \begin{equation}
        \nonumber
        p'=\frac{p}{p-1}\quad(p>1),\quad 1'=\infty.
    \end{equation}
    For $0\le\alpha\le1/p'$  the \emph{Riesz $(p,\alpha)$-variation} of $f$ on the interval $I$ is defined by
    \begin{equation}
        \label{p-alpha-var}
        \mathcal{V}_p^\alpha(f;I)=\sup_\mathcal{I}\left(\sum_{I_j\in\mathcal{I}}\left(\frac{{\rm osc}(f;I_j)}{|I_j|^{\alpha}}\right)^p\right)^{1/p},
    \end{equation}  
    where the supremum is taken over all families $\mathcal{I}=\{I_j\}$ of non-overlapping intervals contained in $I$. We also set
    \begin{equation}
        \nonumber
        \mathcal{BV}_p^\alpha(I)=\{f:I\rightarrow\mathbb{R}:\mathcal{V}_p^\alpha(f;I)<\infty\}.
    \end{equation}
\end{defn}
The parameter $\alpha$ should be interpreted as an index of fractional smoothness. In the case of zero smoothness, the quantity defined in (\ref{p-alpha-var}) reduces to the classical total variation when $p=1$ and to the total $p$-variation in the sense of Wiener \cite{Wi24} when $p>1$ (see also \cite{Yo36}). In this case, we use the notation
\begin{equation}
    \nonumber
    \mathcal{V}_p^0(f;I)=\mathcal{V}_p(f;I),\quad\mathcal{BV}_p^0(I)=\mathcal{BV}_p(I).
\end{equation}
Seminorms of the type (\ref{p-alpha-var}) were studied in e.g. \cite{KoLi09,Li13}.
For $1<p<\infty$ and $\alpha=1/p'$, a theorem of F. Riesz states that 
\begin{equation}
    \nonumber
    \mathcal{V}_p^{1/p'}(f;I)=\sup_{\mathcal{I}}\left(\sum_{I_j\in\mathcal{I}}\frac{{\rm osc}(f;I_j)^p}{|I_j|^{p-1}}\right)^{1/p}=\|f'\|_{L^p(I)}.
\end{equation}
For a discussion of F. Riesz's theorem, see the monograph \cite[Section 3.5]{ApBaMe14} (see also \cite{BaLi15} and the references given therein). Thus, for $p\in(1,\infty)$ and $\alpha\in[0,1/p']$, the spaces $\mathcal{BV}_p^\alpha(I)$ form a scale of spaces of fractional smoothness connecting $\mathcal{BV}_p$ and the Sobolev space $W^{1,p}(I)$ (i.e. the functions $f\in L^p(I)$ with weak derivative $f'\in L^p(I))$. These spaces were investigated from this point of view in \cite{Li13}.
Note in passing that if $p=1$, then $1/p'=0$, forcing $\alpha=0$. This reflects the fact that functions in $\mathcal{BV}_1(I)$ do not possess any fractional smoothness in the sense defined above.

For a measurable function $f:I\rightarrow\mathbb{R}$, we denote by $f^*:I^*\rightarrow[0,\infty)$ the \emph{non-increasing rearrangement of $f$}. (See Section \ref{Aux} for the definition of $f^*$ and $I^*$.)

A remarkable property of $f^*$  is its variation-diminishing property expressed by the classical \emph{P\'{o}lya-Szeg\H{o} principle} \cite{PoSz51} (a similar result holds in higher dimensions): 
\begin{equation}
    \label{polya-szego-classical}
    \|(f^*)'\|_{L^p(I^*)}\le\|f'\|_{L^p(I)}\quad(1\le p<\infty).
\end{equation}
The main result of this note is that the functionals (\ref{p-alpha-var}) have a similar variation-diminishing property.
\begin{theorem}
    \label{mainTeo}
    For $1\le p<\infty$ and $0\le\alpha\le1/p'$ there holds
    \begin{equation}
    \label{polyaszegoIneq}
    \mathcal{V}_p^\alpha(f^*;I^*)\le\mathcal{V}_p^\alpha(f;I)
    \end{equation}
    for any measurable function $f:I\rightarrow\mathbb{R}$.
\end{theorem}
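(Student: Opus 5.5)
Since $f^*$ is the rearrangement of $|f|$ and ${\rm osc}(|f|;E)\le{\rm osc}(f;E)$ for every $E$, we have $\mathcal{V}_p^\alpha(|f|;I)\le\mathcal{V}_p^\alpha(f;I)$. So I would assume $f\ge0$ from the start. The plan is a direct transfer argument. Fix a finite family of non-overlapping intervals $[s_j,t_j]\subset I^*$, $j=1,\dots,n$, ordered so that $t_j\le s_{j+1}$. For each $j$ I will produce an interval $J_j\subset I$ with
\begin{equation}
\nonumber
{\rm osc}(f;J_j)\ge {\rm osc}(f^*;[s_j,t_j])-\varepsilon,\qquad |J_j|\le t_j-s_j,
\end{equation}
and with the $J_j$ non-overlapping. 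Since $\alpha\ge0$, the second inequality gives $|J_j|^{-\alpha}\ge (t_j-s_j)^{-\alpha}$. Summing and letting $\varepsilon\to0$ then gives (\ref{polyaszegoIneq}). Note that this argument uses only $\alpha\ge0$, not the upper bound $\alpha\le 1/p'$; that bound only guarantees the functional is nontrivial.

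\textbf{Transition intervals.} Because $f^*$ is non-increasing, ${\rm osc}(f^*;[s_j,t_j])=f^*(s_j)-f^*(t_j)$, with $f^*(s_j)$ read as the left limit when $s_j>0$. Set $a_j=f^*(s_j)-\varepsilon$ and $b_j=f^*(t_j)$, and define
\begin{equation}
\nonumber
U_j=\{f>a_j\},\qquad L_j=\{f\le b_j\},\qquad M_j=\{b_j<f\le a_j\}.
\end{equation}
From the definition of the distribution function, $|U_j|>0$ and $|L_j|>0$; the latter may need a small adjustment of $b_j$ when $t_j=|I|$. Also $|M_j|\le t_j-s_j$, up to an error that vanishes as $\varepsilon\to0$. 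I would replace $U_j$ and $L_j$ by their sets of essential points, say Lebesgue density points, so that null-set modifications of $f$ cause no trouble. Because $U_j$ and $L_j$ are both non-null, some point $u$ of the first lies next to some point $l$ of the second, in the sense that the open interval between them meets neither $U_j$ nor $L_j$ in positive measure. That interval $J_j$ therefore lies in $M_j$ up to a null set, so $|J_j|\le|M_j|\le t_j-s_j$. Its closure contains points where $f>a_j$ and where $f\le b_j$, so ${\rm osc}(f;J_j)\ge a_j-b_j$. Because ${\rm osc}$ takes a pointwise supremum, passing to essential values only helps us.

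\textbf{Disjointness.} The value ranges $(b_j,a_j]$ are pairwise disjoint, since $b_j=f^*(t_j)\ge f^*(s_{j+1})>a_{j+1}$ by monotonicity. Hence the sets $M_j$ are pairwise disjoint. Each $J_j$ is an interval which, apart from its endpoints, lies a.e. inside $M_j$. Two such intervals cannot overlap in a set of positive length, because the overlap would lie a.e. in $M_j\cap M_k=\emptyset$. So $\{J_j\}$ is an admissible family in (\ref{p-alpha-var}).

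\textbf{Main obstacle.} The delicate step is the measure-theoretic bookkeeping at the endpoints. This includes jumps of $f^*$ and the left versus right values at $s_j,t_j$. It also includes the boundary cases $s_j=0$ and $t_j=|I^*|$, where $f^*(0)$ is the essential supremum and the pointwise supremum of $f$ may exceed it. Finally, the "adjacent essential points" selection must be made rigorous. I would handle these by first proving the inequality for intervals whose endpoints lie strictly inside $I^*$ at continuity points of $f^*$. I would then pass to general families through the left/right limits of the monotone function $f^*$, losing only $\varepsilon$.
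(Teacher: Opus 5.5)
Your transfer scheme is the right idea, but the step that produces $J_j$ is false for general measurable $f$, and the remedy you sketch does not fix it. You claim that some essential point $u$ of $U_j=\{f>a_j\}$ is ``adjacent'' to some essential point $l$ of $L_j=\{f\le b_j\}$, with $(u,l)$ meeting neither set in positive measure. This fails whenever the two sets interleave at every scale. Take $I=[0,1]$ and $f=\mathbf{1}_C$, where $C$ is a Smith--Volterra--Cantor set: closed, nowhere dense, $|C|=1/2$, and $|C\cap J|>0$ for every open interval $J$ meeting $C$. Then $f^*=\mathbf{1}_{[0,1/2]}$. For $[s,t]=[1/4,3/4]$, whose endpoints are continuity points of $f^*$, you get ${\rm osc}(f^*;[s,t])=1$, $U=C$, $L=[0,1]\setminus C$ and $M=\emptyset$. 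Since $U\cup L=[0,1]$, no interval of positive length lies a.e.\ in $M$, so your construction only returns degenerate $J$, whose oscillation is $0$. Restricting to continuity points of $f^*$ does not help, because the obstruction sits in the level sets of $f$, not in the jumps of $f^*$. There is also a second problem with essential endpoints. If $(u,l)$ meets $U_j$ only in a null set, the $U_j$-mass accumulating at $u$ lies to the left of $u$, outside $[u,l]$. Moreover $f(u)$ itself need not exceed $a_j$. So ${\rm osc}(f;[u,l])\ge a_j-b_j$ is not justified.

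What is missing is a reduction, not finer bookkeeping. If $\alpha>0$ and $f$ is discontinuous at some $x_0$, then ${\rm osc}(f;J)\ge c>0$ for every interval $J\ni x_0$. Hence ${\rm osc}(f;J)/|J|^\alpha\to\infty$ and $\mathcal{V}_p^\alpha(f;I)=\infty$. So you may assume $f\in C(I)$, which the paper does via Proposition~\ref{contProp}. The case $\alpha=0$ is immediate, as in Proposition~\ref{verySimplePS}. For continuous $f$, the sets $\overline{U_j}\subset\{f\ge a_j\}$ and $L_j$ are disjoint compact sets. An adjacent pair with $u\neq l$ then exists by Lemma~\ref{mainLemma}, $J_j\subset M_j$ holds pointwise, and the rest of your argument is correct.

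Once repaired, your route is genuinely leaner than the paper's. The paper perturbs $f$ (Lemma~\ref{LipschitzApprox}, imported from elsewhere) so that $f^*$ is strictly decreasing and $|M_k|=b_k-a_k$ exactly. It then removes the perturbation using Lemma~\ref{semicontinuityLem} and the estimate \eqref{mainTeoEq2}, which is where $\alpha\le 1/p'$ is used. You only need the inequality $|M_j|\le t_j-s_j$, and this holds for every $f$, because $\mu_f(f^*(t))\le t$ and $\mu_f(y)\ge s$ whenever $y<f^*(s)$. This removes the perturbation lemma, the semicontinuity step and the upper bound on $\alpha$, as you correctly observed.
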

For discussions of other types of fractional P\'{o}lya-Szeg\H{o} principles, see e.g. \cite{Ko07} and \cite{Ca26}. 

In Section \ref{AddComm}, we discuss related and additional results. 
One interesting feature of (\ref{polyaszegoIneq}) is that, unlike (\ref{polya-szego-classical}), it applies to rough functions, including certain nowhere differentiable functions such as the Takagi-van der Waerden function and Riemann's ``nondifferentiable'' function. We discuss these questions. We also adapt the proof of Theorem \ref{mainTeo} to obtain a sharp estimate for the modulus of continuity of the non-increasing rearrangement $f^*$ with respect to the $\mathcal{BV}_p$-norm, expressed in terms of the corresponding modulus of continuity of $f$. This may be regarded as a $\mathcal{BV}_p$-analogue of a line of research initiated by Ul'yanov, see e.g. \cite{Ko07}.
Finally, we briefly indicate how (\ref{polyaszegoIneq}) can be applied to a class of nonlinear Fredholm integral equations.

\section{Auxiliary results}
\label{Aux}

\subsection{Non-increasing rearrangements}

Let $f:I\rightarrow\mathbb{R}$ be measurable. The \emph{distribution function} of $f$ is defined by
\begin{equation}
    \nonumber
    \mu_f(y)=|\{x\in I:|f(x)|>y\}|\quad(y\ge0).
\end{equation}
The  \emph{non-increasing rearrangement} $f^*$ of $f$ is defined on $I^*:=[0,b-a]$ by
\begin{equation}
    \nonumber
    f^*(t)=\begin{cases}
        \esssup_{x\in I}|f(x)|,&t=0\\
        \inf\{y>0:\mu_f(y)<t\},&0<t<b-a\\
        \essinf_{x\in I}|f(x)|,&t=b-a.
    \end{cases}
\end{equation}
The functions $|f|$ and $f^*$ are \emph{equimeasurable}: for any $s\ge0$ there holds
\begin{equation}
    \label{equimeasurable}
    |\{x\in I:|f(x)|>s\}|=|\{t\in I^*:f^*(t)>s\}|.
\end{equation}
For discussions of the theory and applications of non-increasing rearrangements, see \cite{BeSh88, Ko07}.

We shall need the following simple lemma.
\begin{lem}
    \label{decreasingLem}
    For any $y\ge0$
    \begin{equation}
        \label{eqlevelset}
        |\{x\in I:|f(x)|=y\}|=|\{t\in I^*:f^*(t)=y\}|.
    \end{equation}
    In particular, $f^*$ is strictly decreasing on $I^*$ if and only if 
    $$
    |\{x\in I:|f(x)|=y\}|=0
    $$
    for every $y\ge0$.
\end{lem}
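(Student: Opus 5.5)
The plan is to derive the level-set identity (\ref{eqlevelset}) from the equimeasurability relation (\ref{equimeasurable}) by a limiting argument, and then read off the monotonicity statement. Fix $y\ge0$ and write
$$\{|f|=y\}=\{|f|>y-1/n\ \text{for all } n\}\setminus\{|f|>y\}$$
for $y>0$, that is,
$$\{|f|\ge y\}=\bigcap_{n}\{|f|>y-1/n\},$$
a decreasing intersection of sets of finite measure (everything lives in the bounded interval $I$). Continuity of Lebesgue measure from above gives
$$|\{|f|\ge y\}|=\lim_n\mu_f(y-1/n).$$
By (\ref{equimeasurable}) applied at $s=y-1/n$, the same limit equals $|\{f^*\ge y\}|$. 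Subtracting $|\{|f|>y\}|=|\{f^*>y\}|$, which is (\ref{equimeasurable}) at $s=y$ and is finite, yields (\ref{eqlevelset}) for $y>0$. For $y=0$ there is no room to approximate from below, so we use complements instead: $\{|f|=0\}=I\setminus\{|f|>0\}$ and $\{f^*=0\}=I^*\setminus\{f^*>0\}$, since $f^*\ge0$. Because $|I|=|I^*|=b-a$, (\ref{equimeasurable}) at $s=0$ gives equality of the measures.

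For the ``in particular'' part, the first step is to note that $f^*$ is non-increasing on $I^*$. On the open interval this is immediate from the definition as an infimum over the sets $\{y:\mu_f(y)<t\}$, which shrink as $t$ grows. At the endpoints it uses the standard facts $f^*(t)\le\esssup|f|$ and $f^*(t)\ge\essinf|f|$; for the latter, $\mu_f(y)=b-a$ whenever $y<\essinf|f|$. For a non-increasing function, a level set $\{f^*=y\}$ is an interval (possibly degenerate or empty), and $f^*$ fails to be strictly decreasing exactly when some level set contains two distinct points, i.e. has positive length. Hence $f^*$ is strictly decreasing iff $|\{f^*=y\}|=0$ for all $y\ge0$, and by (\ref{eqlevelset}) this holds iff $|\{|f|=y\}|=0$ for all $y$.

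The only delicate point I expect is the endpoint behaviour of $f^*$ at $t=0$ and $t=b-a$, where it is defined separately. There I would check directly that these values are compatible with monotonicity, so that the level sets really are intervals. Beyond that, one must make sure not to apply the limiting argument at $y=0$ with negative $s$, which is why that case is handled through complements instead.
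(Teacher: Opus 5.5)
Your proof is correct and follows essentially the same route as the paper. The paper also obtains $|\{|f|=y\}|=\lim_n(\mu_f(y-1/n)-\mu_f(y))$ for $y>0$ (phrased via the empty intersection of the sets $\{y-1/n<|f|<y\}$ rather than continuity from above on $\{|f|>y-1/n\}$), transfers this to $f^*$ by equimeasurability, handles $y=0$ through complements, and reads off the strict-monotonicity criterion from the absence of intervals of constancy; your explicit check that $f^*$ is non-increasing at the endpoints $t=0$ and $t=b-a$ is a detail the paper leaves implicit.
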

\begin{proof}
    To prove (\ref{eqlevelset}) for $y>0$, take $n\in\mathbb{N}$ large enough to have $y-1/n\ge0$. Then
    \begin{equation}
        \nonumber
        |\{x\in I:y-1/n<|f(x)|<y\}|=\mu_f(y-1/n)-\mu_f(y)-|\{x\in I:|f(x)|=y\}|.
    \end{equation}
    Since 
    $$
    \bigcap_{n=1}^\infty\{x\in I:y-1/n<|f(x)|<y\}=\emptyset
    $$
    we have
    $$
    \lim_{n\rightarrow\infty}|\{x\in I:y-1/n<|f(x)|<y\}|=0,
    $$
    and consequently for every $y>0$ there holds
    \begin{equation}
        \label{limit}
        |\{x\in I:|f(x)|=y\}|=\lim_{n\rightarrow\infty}(\mu_f(y-1/n)-\mu_f(y)).
    \end{equation}
    By (\ref{equimeasurable}),
    \begin{eqnarray}
        \nonumber
        |\{x\in I:|f(x)|=y\}|&=&\lim_{n\rightarrow\infty}(\mu_{f^*}(y-1/n)-\mu_{f^*}(y))\\
        \nonumber
        &=&|\{t\in I^*:f^*(t)=y\}|,
    \end{eqnarray}
    where the last equation follows by applying (\ref{limit}) to $f^*$. Similarly, for $y=0$,
    \begin{equation}
        \nonumber
        |\{x\in I:f(x)=0\}|=|I|-\mu_f(0)=|I^*|-\mu_{f^*}(0)=|\{t\in I^*:f^*(t)=0\}|,
    \end{equation}
    proving (\ref{eqlevelset}) for every $y\ge0$.
    To prove the second statement, simply note that $f^*$ is strictly decreasing on $I^*$ if and only if $f^*$ has no interval where $f^*$ is constant which is equivalent to the condition that $\{t\in I^*:f^*(t)=y\}$ has measure 0 for every $y\ge0$.
\end{proof}

\subsection{Some results from real analysis}

Denote by $C(I)$ the space of continuous functions on $I$. Let $\alpha>0$ and denote by ${\rm Lip}_\alpha(I)$ the subspace of $C(I)$ consisting of all functions $f:I\rightarrow\mathbb{R}$ for which there exists a constant $C>0$ such that for every $x,y\in I$
\begin{equation}
    \nonumber
    |f(x)-f(y)|\le C|x-y|^\alpha.
\end{equation}
The space ${\rm Lip}_\alpha(I)$ is endowed with the following seminorm
\begin{equation}
    \nonumber
    |f|_{{\rm Lip}_\alpha(I)}=\sup_{x,y\in I, x\neq y}\frac{|f(x)-f(y)|}{|x-y|^\alpha}.
\end{equation}
For $\alpha=1,$ we write the space ${\rm Lip}_1(I)={\rm Lip}(I)$. For $\alpha>1$, it is easy to see that ${\rm Lip}_\alpha(I)$ consists only of constant functions. The next proposition relates the spaces ${\rm Lip}_\alpha(I)$ to $\mathcal{BV}_p^\alpha(I)$. The proof is immediate.
\begin{prop}
    \label{contProp}
    Assume that $1<p<\infty$ and $\alpha\in(0,1/p']$
    \begin{equation}
        \nonumber
        {\rm Lip}_{\alpha+1/p}(I)\hookrightarrow\mathcal{BV}_p^\alpha(I)\hookrightarrow{\rm Lip}_\alpha(I).
    \end{equation}
    Here, $Y\hookrightarrow X$ means that the space $Y$ is continuously embedded in the space $X$. 
\end{prop}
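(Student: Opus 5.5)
Both embeddings come from testing the definition (\ref{p-alpha-var}) on well-chosen families of intervals; I would treat them separately.

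\emph{Left embedding.} Let $f\in{\rm Lip}_{\alpha+1/p}(I)$ and put $\beta=\alpha+1/p$. Since $\alpha\le 1/p'=1-1/p$, we have $\beta\le1$. On any interval $I_j$ we get ${\rm osc}(f;I_j)\le|f|_{{\rm Lip}_\beta(I)}|I_j|^{\beta}$. Hence
\[
\left(\frac{{\rm osc}(f;I_j)}{|I_j|^\alpha}\right)^p\le|f|_{{\rm Lip}_\beta(I)}^p|I_j|^{p\beta-p\alpha}=|f|_{{\rm Lip}_\beta(I)}^p|I_j|.
\]
For a non-overlapping family the lengths satisfy $\sum_j|I_j|\le|I|$. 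Summing and taking the supremum over families $\mathcal{I}$ gives
\[
\mathcal{V}_p^\alpha(f;I)\le|I|^{1/p}|f|_{{\rm Lip}_{\alpha+1/p}(I)}.
\]
If one uses inhomogeneous norms (adding, say, $\|f\|_{L^\infty}$ or $|f(a)|$ to both seminorms), the same bound gives continuity of the inclusion.

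\emph{Right embedding.} Let $f\in\mathcal{BV}_p^\alpha(I)$ and take $x<y$ in $I$. Test (\ref{p-alpha-var}) with the family consisting of the single interval $[x,y]$:
\[
\frac{|f(x)-f(y)|}{|x-y|^\alpha}\le\frac{{\rm osc}(f;[x,y])}{|x-y|^\alpha}\le\mathcal{V}_p^\alpha(f;I),
\]
so $|f|_{{\rm Lip}_\alpha(I)}\le\mathcal{V}_p^\alpha(f;I)$.

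The one point that needs care is that ${\rm Lip}_\alpha(I)$ was defined as a subspace of $C(I)$, so membership requires continuity of $f$. This follows from the displayed inequality itself, because $\alpha>0$ by assumption: the estimate $|f(x)-f(y)|\le\mathcal{V}_p^\alpha(f;I)|x-y|^\alpha$ forces $f$ to be continuous. This is where the hypothesis $\alpha>0$ is used; at $\alpha=0$, $\mathcal{BV}_p$ contains discontinuous functions and the inclusion fails.

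I expect no real obstacle. The only things to keep straight are the exponent arithmetic $p(\alpha+1/p)-p\alpha=1$, which makes the left embedding constant depend only on $|I|$, and the continuity remark in the right embedding.
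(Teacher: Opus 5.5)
Your proof is correct and is the direct argument the paper has in mind when it says ``the proof is immediate.'' For the left embedding you use the exponent identity $p(\alpha+1/p)-p\alpha=1$ together with $\sum_j|I_j|\le|I|$. For the right embedding you test $\mathcal{V}_p^\alpha$ on a single interval, which also gives continuity because $\alpha>0$; that continuity is exactly what the paper later uses when it reduces to $f\in C(I)$.
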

The next lemma is of a well-known type. It plays an important role in the proof of Theorem \ref{mainTeo}, hence we provide a proof.
\begin{lem}
    \label{mainLemma}
    Let $f:I\rightarrow\mathbb{R}$ be a continuous function. Take $y',y''\in f(I)$, $y'<y''$, and consider the set
    $$
    E=\{x\in I:y'<f(x)<y''\}.
    $$
    Then there exists an interval $J=(x',x'')\subset E$ such that
    \begin{equation}
    \label{extremal}
        \inf_{x\in J}f(x)=y'\quad\text{and}\quad\sup_{x\in J}f(x)=y''.
    \end{equation}
\end{lem}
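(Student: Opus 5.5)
Since $y',y''\in f(I)$, we can pick points $a',a''\in I$ with $f(a')=y'$ and $f(a'')=y''$. We treat the case $a'<a''$; the case $a'>a''$ follows from the same argument with the orientation of $I$ reversed. The plan is to shrink the interval $[a',a'']$ from both ends until it touches the levels $y'$ and $y''$ only at its endpoints. The open interval that remains will lie in $E$, and continuity will give the required infimum and supremum.

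First, set
$$
x'=\sup\{x\in[a',a'']:f(x)=y'\}.
$$
This set is closed by continuity of $f$ and nonempty because it contains $a'$, so $f(x')=y'$. Since $f(a'')=y''\neq y'$, we have $x'<a''$. Next, set
$$
x''=\inf\{x\in[x',a'']:f(x)=y''\}.
$$
The same reasoning gives $f(x'')=y''$ and $x'<x''$. By construction, $f$ does not take the value $y'$ on $(x',x'']$, and it does not take the value $y''$ on $[x',x'')$.

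The key step is to show $J=(x',x'')\subset E$, which is where the intermediate value theorem enters. Suppose some $x\in J$ had $f(x)\le y'$. If $f(x)=y'$, this contradicts the maximality of $x'$. If $f(x)<y'$, then on $[x,x'']$ the function $f$ passes from a value below $y'$ to $y''>y'$, so it takes the value $y'$ at some point of $(x,x'')$. That point lies in $(x',x'')$, which again contradicts the choice of $x'$. The symmetric argument on $[x',x]$ rules out $f(x)\ge y''$, using the minimality of $x''$. Hence $y'<f<y''$ on $J$, that is, $J\subset E$.

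Finally, we get \eqref{extremal} from continuity at the endpoints. Because $y'<f<y''$ on $J$, we have $\inf_J f\ge y'$ and $\sup_J f\le y''$. Because $f(x)\to f(x')=y'$ as $x\downarrow x'$ and $f(x)\to f(x'')=y''$ as $x\uparrow x''$, these bounds are attained in the limit, so equality holds. The only delicate point in the argument is choosing the extremal points $x'$ and $x''$ in the right order so that no intermediate crossings remain. Once that is done, the rest is routine.
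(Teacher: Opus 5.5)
Your proof is correct and takes essentially the same route as the paper. You take $x'$ as the last point in $[a',a'']$ where $f=y'$ and $x''$ as the first point after $x'$ where $f=y''$, then use the intermediate value theorem to show $(x',x'')\subset E$, and finish with continuity at the endpoints. The only difference is cosmetic: you get $x'<x''$ from closedness of the level sets, where the paper uses sequences.
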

\begin{proof}
    By the intermediate value theorem, there are $a_0,b_0\in I$ such that $f(a_0)=y'$ and $f(b_0)=y''$. Without loss of generality, we may assume $a_0<b_0$. Set
    $$
        A=\{x\in[a_0,b_0]:f(x)=y'\}.
    $$
    Since $a_0\in A$ and $A$ is bounded from above by $b_0$, the supremum $x'=\sup A$ is well-defined. Set
    $$
        B=\{x\in[x',b_0]:f(x)=y''\}.
    $$
    Since $b_0\in B$ and $B$ is bounded from below by $x'$, the infimum $x''=\inf B$ is well-defined. Clearly $x''\ge x'$. We shall now prove the following:
    \begin{enumerate}
        \item\label{lemP1} $x'<x''$;
        \item\label{lemP2} $f(x')=y'$, $f(x'')=y''$;
        \item\label{lemP3} $J:=(x',x'')$ is a subset of $E$;
        \item\label{lemP4} the relations (\ref{extremal}) hold.
    \end{enumerate}
    We start with (\ref{lemP1}). Assume for a contradiction that $x'=x''$. Take $\{x'_n\}\subset A$ with $x'_n\rightarrow x'$ and $\{x''_n\}\subset B$ with $x''_n\rightarrow x''$. If $x'=x''$, then by continuity of $f$, $\lim f(x'_n)=\lim f(x''_n)$. On the other hand, for every $n\in\mathbb{N}$, there holds
    $$
    f(x'_n)=y'\quad{\rm and}\quad f(x''_n)=y''
    $$
    whence $y'=y''$, contradicting $y'<y''$. Thus, $x'<x''$. Property (\ref{lemP2}) follows from the fact that $A$ and $B$ are closed and bounded sets, and therefore $x'\in A$ and $x''\in B$. 
    We proceed to show (\ref{lemP3}). Fix any $x\in(x',x'')$. First note that $f(x)\le y'$ is impossible. Indeed, assume that $f(x)\le y'$.
    Since $f(x'')=y''>y'$, the intermediate value theorem guarantees the existence of $c\in[x,x'')$ such that $f(c)=y'$, i.e. $c\in A$. But $c\ge x>x'$ and $c\in A$ contradicts the fact that $x'=\sup A$. Hence, $f(x)>y'$.
    Similarly, assume that $f(x)\ge y''$, then there must be $c\in(x',x]$ such that $f(c)=y''$, i.e., $c\in B$. But $c\le x< x''$ and $c\in B$ contradicts the fact that $x''=\inf B$. Consequently, for $x\in(x',x'')$ we have $y'<f(x)<y''$ whence $(x',x'')\subset E$.
    Finally, (\ref{lemP4}) is immediate from continuity of $f$, (\ref{lemP2}) and (\ref{lemP3}).
\end{proof}
Finally, we shall use the following lemma, expressing lower semicontinuity of the Riesz $(p,\alpha)$-variation.
\begin{lem}
    \label{semicontinuityLem}
    Assume that $\{f_n\}$ is a sequence of real-valued functions defined on $I$ that converges pointwise for every $x\in I$ to $f:I\rightarrow\mathbb{R}$. Then
    \begin{equation}
        \label{semicontinuity}
        \mathcal{V}^\alpha_p(f;I)\le\liminf_{n\rightarrow\infty}\mathcal{V}_p^\alpha(f_n;I).
    \end{equation}
\end{lem}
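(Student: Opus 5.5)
The plan is to reduce the inequality to a statement about a single fixed finite family of intervals, where pointwise convergence can be used directly. If $\liminf_{n}\mathcal{V}_p^\alpha(f_n;I)=\infty$ there is nothing to prove. Otherwise, by the definition (\ref{p-alpha-var}) of $\mathcal{V}_p^\alpha(f;I)$ as a supremum, it suffices to show the following: for every finite family $\mathcal{I}=\{I_1,\dots,I_m\}$ of non-overlapping intervals in $I$,
\begin{equation}
\nonumber
\sum_{j=1}^m\left(\frac{{\rm osc}(f;I_j)}{|I_j|^\alpha}\right)^p\le\liminf_{n\rightarrow\infty}\mathcal{V}_p^\alpha(f_n;I)^p.
\end{equation}
Finite families suffice because, for an infinite family, the sum is the supremum of its finite partial sums. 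I would also discard degenerate intervals with $|I_j|=0$, interpreting them so that they contribute nothing.

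The key step is to approximate each oscillation by values of $f$ at finitely many points. Fix $\varepsilon>0$. For each $j$ I pick points $u_j,v_j\in I_j$ such that $|f(u_j)-f(v_j)|\ge{\rm osc}(f;I_j)-\varepsilon$. If ${\rm osc}(f;I_j)=\infty$, I instead pick them so that $|f(u_j)-f(v_j)|\ge M$ for an arbitrary large $M$. For every $n$ we have ${\rm osc}(f_n;I_j)\ge|f_n(u_j)-f_n(v_j)|$, and therefore
\begin{equation}
\nonumber
\mathcal{V}_p^\alpha(f_n;I)^p\ge\sum_{j=1}^m\frac{|f_n(u_j)-f_n(v_j)|^p}{|I_j|^{\alpha p}}.
\end{equation}
The right-hand side involves only the finitely many points $u_j,v_j$. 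By pointwise convergence it tends to $\sum_j|f(u_j)-f(v_j)|^p|I_j|^{-\alpha p}$ as $n\to\infty$.

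Taking $\liminf_n$ gives
\begin{equation}
\nonumber
\liminf_{n\rightarrow\infty}\mathcal{V}_p^\alpha(f_n;I)^p\ge\sum_{j=1}^m\frac{({\rm osc}(f;I_j)-\varepsilon)_+^p}{|I_j|^{\alpha p}}.
\end{equation}
Letting $\varepsilon\to0$ yields the reduced claim. In the infinite-oscillation case, letting $M\to\infty$ forces the $\liminf$ to be infinite. Finally, taking the supremum over $\mathcal{I}$ and then $p$-th roots gives (\ref{semicontinuity}).

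There is no genuine obstacle here. The only point needing care is the order of operations: the family $\mathcal{I}$ and the points $u_j,v_j$ must be fixed before passing to the limit in $n$. The reason is that the supremum defining ${\rm osc}$, and the supremum over families, do not commute with pointwise limits. The inequality can only go in the lower-semicontinuous direction because finitely many point evaluations always bound the variation of $f_n$ from below.
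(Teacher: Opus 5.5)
Your proof is correct and takes essentially the same route as the paper. Both fix a finite family of intervals and use pointwise convergence at points of each $I_j$ to bound $\osc(f;I_j)$ by the oscillations of the $f_n$. The only difference is in ordering: the paper first shows $\osc(f;I_k)\le\liminf_{n}\osc(f_n;I_k)$ for each $k$ and then uses $\sum\liminf\le\liminf\sum$, whereas you choose near-extremal points $u_j,v_j$ for all intervals at once, so the finite sum converges outright.
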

\begin{proof}
    Take an arbitrary finite family of disjoint intervals $\{I_k:1\le k\le N\}$ contained in $I$. 
    Fix $k$ and $x,y\in I_k$. Since $f_n\rightarrow f$ pointwise,
    \begin{equation}
        \nonumber
        f(x)-f(y)=\lim_{n\rightarrow\infty}(f_n(x)-f_n(y))\le\liminf_{n\rightarrow\infty}\osc(f_n;I_k).
    \end{equation}
    Taking supremum over $x\in I_k$ and infimum over $y\in I_k$ gives   
    \begin{equation}
        \nonumber
        \osc(f;I_k)\le\liminf_{n\rightarrow\infty}\osc(f_n;I_k).
    \end{equation}
    It follows that
    \begin{eqnarray}
        \nonumber
        \sum_{k=1}^N\frac{\osc(f;I_k)^p}{|I_k|^{\alpha p}}&\le&\sum_{k=1}^N\frac{\liminf_{n\rightarrow\infty}\osc(f_n;I_k)^p}{|I_k|^{\alpha p}}\\
        \nonumber
        &\le&\liminf_{n\rightarrow\infty}\sum_{k=1}^N\frac{\osc(f_n;I_k)^p}{|I_k|^{\alpha p}}\le\liminf_{n\rightarrow\infty}\mathcal{V}_p^\alpha(f_n;I)^p,
    \end{eqnarray}
    thus proving (\ref{semicontinuity}).
    \end{proof}

\section{Proof of Theorem \ref{mainTeo}}
\label{Proof}
In this section, we prove Theorem \ref{mainTeo}.
Note first that for any $E\subset I$,
\begin{equation}
    \nonumber
    \osc(|f|;E)\le\osc(f;E).
\end{equation}
Consequently, $\mathcal{V}_p^\alpha(|f|;I)\le\mathcal{V}_p^\alpha(f;I)$ for any $1\le p<\infty$ and $0\le\alpha\le 1/p'$.
Furthermore, since $f^*=|f|^*$, we may assume without loss of generality that $f\ge0$ throughout the proofs below. For the reader's convenience, we shall restate this assumption whenever appropriate.

We first consider the case of Theorem \ref{mainTeo} with zero smoothness (i.e. $\alpha=0$). The proof is immediate, but we include it for the sake of completeness.
\begin{prop}
    \label{verySimplePS}
    Assume that $1\le p<\infty$ and $f\ge0$. Then
    \begin{equation}
        \label{verySimplePS-ineq}
        \mathcal{V}_p(f^*;I^*)\le\mathcal{V}_p(f;I).
    \end{equation}
\end{prop}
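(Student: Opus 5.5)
Since $f^*$ is non-increasing, I would first reduce the left-hand side of (\ref{verySimplePS-ineq}) to a supremum over partitions. For a monotone function, the oscillation on an interval is the difference of the values at its endpoints. Enlarging the intervals of a non-overlapping family to the consecutive intervals of a partition only increases oscillations. Hence
$$
\mathcal{V}_p(f^*;I^*)^p=\sup\sum_{k=1}^n\bigl(f^*(t_{k-1})-f^*(t_k)\bigr)^p,
$$
where the supremum is over all partitions $0=t_0<t_1<\dots<t_n=b-a$. So it suffices to fix such a partition, set $y_k=f^*(t_k)$ and $d_k=y_{k-1}-y_k\ge0$, and show that $\sum d_k^p\le\mathcal{V}_p(f;I)^p$.

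The second step is to realize the levels $y_k$ approximately as values of $f$. Given $\varepsilon>0$, I claim there exist points $x_0,\dots,x_n\in I$ with $|f(x_k)-y_k|\le\varepsilon$.
\begin{itemize}
\item For $0<t_k<b-a$, the definition of $f^*$ gives $\mu_f(y_k+\varepsilon)<t_k$. Since $y_k-\varepsilon<f^*(t_k)$, we also get $\mu_f(y_k-\varepsilon)\ge t_k$. Therefore the set $\{y_k-\varepsilon<f\le y_k+\varepsilon\}$ has positive measure, and in particular it is nonempty.
\item For $k=0$ and $k=n$, the values are the essential supremum and essential infimum. By the definitions of $\esssup$ and $\essinf$, the sets $\{f>y_0-\varepsilon\}$ and $\{f<y_n+\varepsilon\}$ have positive measure, so the same conclusion holds.
\end{itemize}
This measure-theoretic bookkeeping is the only delicate step. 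I expect it to be the main obstacle, mainly the care with strict versus non-strict inequalities in $\mu_f$, and the case $y_0=+\infty$. The latter is trivial, because then $f$ is unbounded above, so $\mathcal{V}_p(f;I)=\infty$.

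The third step is combinatorial. Relabel the points $x_k$ in increasing order of position, $z_0\le z_1\le\dots\le z_n$. Taking the intervals $[z_{i-1},z_i]$, which are non-overlapping, we get
$$
\sum_i|f(z_i)-f(z_{i-1})|^p\le\mathcal{V}_p(f;I)^p.
$$
Now view the sequence $f(z_0),f(z_1),\dots$ as a path on the real line. Up to errors of order $\varepsilon$, it visits all the levels $y_0\ge y_1\ge\dots\ge y_n$ in some order. Every gap $[y_k,y_{k-1}]$ between consecutive levels must be crossed by at least one step of the path. A single step that crosses several gaps has length at least the sum of those gaps. Since $p\ge1$, we have $(\sum d_k)^p\ge\sum d_k^p$, so each step contributes at least the sum of the $p$-th powers of the gaps it crosses.

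Hence $\sum_k(d_k-2\varepsilon)_+^p\le\mathcal{V}_p(f;I)^p$, where $(\cdot)_+$ denotes the positive part. Letting $\varepsilon\to0$ and then taking the supremum over partitions yields (\ref{verySimplePS-ineq}).
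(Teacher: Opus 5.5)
Your proof is correct, but it takes a longer route than the paper's. The paper applies the superadditivity $\sum_k d_k^p\le\bigl(\sum_k d_k\bigr)^p$ directly to the monotone function $f^*$. Every partition sum is then at most $\bigl(f^*(0)-f^*(b-a)\bigr)^p$, so $\mathcal{V}_p(f^*;I^*)=\esssup_I f-\essinf_I f$. It then suffices to pick two points of $I$ where $f$ is within $\epsilon$ of the essential supremum and of the essential infimum, respectively. The single interval they span gives $\mathcal{V}_p(f;I)\ge\esssup_I f-\essinf_I f-2\epsilon$.

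You use the same superadditivity, but only on the $f$ side, inside the crossing argument. That forces extra work:
\begin{itemize}
\item you must realize all $n+1$ levels $f^*(t_k)$ as approximate values of $f$, including the $\mu_f$ bookkeeping at interior $t_k$;
\item you must reorder the points and run the path argument.
\end{itemize}
The paper never needs the interior bookkeeping, because $f^*(0)$ and $f^*(b-a)$ are by definition the essential supremum and infimum. So the paper's route is shorter, and it also yields the sharper intermediate bound $\mathcal{V}_p(f^*;I^*)\le\osc(f;I)$. Once superadditivity is available, your intermediate levels carry no extra information.

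If you keep your version, state explicitly why each shrunken gap $(y_k+\varepsilon,y_{k-1}-\varepsilon)$ is crossed by a single step. The reason is that no value $f(x_j)$ lies inside it: $f(x_j)\ge y_{k-1}-\varepsilon$ for $j\le k-1$, and $f(x_j)\le y_k+\varepsilon$ for $j\ge k$.
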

\begin{proof}
    In order for (\ref{verySimplePS-ineq}) to be non-trivial, assume that $f\in\mathcal{BV}_p(I)$. 
    Note first that if $g:[a,b]\rightarrow\mathbb{R}$ is monotone on $[a,b]$, then for any $1\le p<\infty$
    \begin{equation}
        \nonumber
        \mathcal{V}_p(g;[a,b])=|g(a)-g(b)|.
    \end{equation}
    Indeed, this is an immediate consequence of the fact that for $p\ge1$ and any $a_1,a_2,\ldots, a_n\ge0$, there holds
    \begin{equation}
        \nonumber
        \sum_{j=1}^na_j^p\le\left(\sum_{j=1}^na_j\right)^p.
    \end{equation}
    Consequently,
    \begin{equation}
        \nonumber
        \mathcal{V}_p(f^*;I^*)=\esssup_{x\in I}f(x)-\essinf_{x\in I}f(x).
    \end{equation}
    On the other hand, given any $\epsilon>0$, there are points $x_0,y\in I$ such that
    \begin{equation}
        \nonumber
        f(x_0)>\esssup_{x\in I}f(x)-\epsilon,\quad f(y_0)<\essinf_{x\in I}f(x)+\epsilon.
    \end{equation}
    It easily follows that
    \begin{equation}
        \nonumber
        \mathcal{V}_p(f;I)\ge f(x_0)-f(y_0)>\esssup_{x\in I}f(x)-\essinf_{x\in I}f(x)-2\epsilon>\mathcal{V}_p(f^*;I^*)-2\epsilon.
    \end{equation}
    Letting $\epsilon\rightarrow0$ completes the proof of (\ref{verySimplePS-ineq}).
\end{proof}
We proceed with the case of positive smoothness. In order to have $\alpha>0$, we must have $p>1$. Note also that in this case, we may assume that $f\in C(I)$ (since $f\in{\rm Lip}_\alpha(I)$, by Proposition \ref{contProp}). As before, we also assume $f\ge0$. 
We first prove a variant of Theorem \ref{mainTeo} with an additional assumption on $f$.
\begin{prop}
    \label{simplePS}
    Let $1<p<\infty$ and $0<\alpha\le 1/p'$. Assume that $f\in C(I)$, $f\ge0$ on $I$, and 
    \begin{equation}
        \label{simplePSA3}
        |\{x\in I:f(x)=y\}|=0\quad\text{for every } y\ge0.
    \end{equation}
    Then 
    \begin{equation}
        \label{simplePS-ineq}
        \mathcal{V}_p^\alpha(f^*;I^*)\le\mathcal{V}_p^\alpha(f;I).
    \end{equation}
\end{prop}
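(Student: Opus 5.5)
Under hypothesis (\ref{simplePSA3}), Lemma \ref{decreasingLem} shows that $f^*$ is strictly decreasing on $I^*$. Since $f$ is continuous, $f^*$ is also continuous and takes every value in $[\min f,\max f]$. The plan is to take an arbitrary finite family $\{J_k\}_{k=1}^N$ of non-overlapping intervals in $I^*$ and produce non-overlapping intervals $\{I_k\}$ in $I$ with
\[
\osc(f;I_k)\ge\osc(f^*;J_k),\qquad |I_k|\le|J_k|.
\]
Because $\alpha\ge0$, this gives $\osc(f;I_k)^p/|I_k|^{\alpha p}\ge \osc(f^*;J_k)^p/|J_k|^{\alpha p}$. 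Summing over $k$ and taking the supremum over families then yields (\ref{simplePS-ineq}).

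We construct the $I_k$ as follows. Write $J_k=[s_k,t_k]$ with $s_k<t_k$; closedness is harmless since $f^*$ is continuous. Set $y''_k=f^*(s_k)$ and $y'_k=f^*(t_k)$. Since $f^*$ is strictly decreasing, $y'_k<y''_k$, and both values lie in $f(I)$. Let
\[
E_k=\{x\in I:y'_k<f(x)<y''_k\}.
\]
By equimeasurability (\ref{equimeasurable}) together with (\ref{simplePSA3}), we have $|E_k|=\mu_f(y'_k)-\mu_f(y''_k)=|\{t:y'_k<f^*(t)<y''_k\}|=t_k-s_k=|J_k|$. Lemma \ref{mainLemma} provides an open interval $I_k\subset E_k$ on which $\inf f=y'_k$ and $\sup f=y''_k$. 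Hence $\osc(f;I_k)=y''_k-y'_k=\osc(f^*;J_k)$ and $|I_k|\le|E_k|=|J_k|$.

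The remaining point, which I expect to be the main obstacle, is that the $I_k$ are pairwise non-overlapping. Since the $J_k$ are non-overlapping and $f^*$ is strictly decreasing, the open value ranges $(y'_k,y''_k)$ are pairwise disjoint: they are the images of the interiors of the $J_k$. Consequently the sets $E_k$ are pairwise disjoint, and so are the $I_k\subset E_k$. Passing to closures of the $I_k$ does not change the oscillation, because $f$ is continuous, and closed intervals with disjoint interiors are non-overlapping. Hypothesis (\ref{simplePSA3}) is used exactly here and in the measure computation: without it, level sets of positive measure would make $f^*$ constant on intervals, and the identity $|E_k|=|J_k|$ could fail. This is presumably why the general case is handled afterwards by an approximation argument via Lemma \ref{semicontinuityLem}.
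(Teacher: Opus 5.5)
Your proposal is correct and follows the paper's proof essentially step for step. Your sets $E_k$ are the paper's $M_k$, the identity $|E_k|=|J_k|$ comes from equimeasurability together with (\ref{simplePSA3}), and Lemma \ref{mainLemma} supplies intervals $I_k$ with $\osc(f;I_k)=\osc(f^*;J_k)$ and $|I_k|\le|J_k|$. You are in fact somewhat more explicit than the paper about why the $I_k$ are non-overlapping and why the endpoint values $f^*(s_k),f^*(t_k)$ lie in $f(I)$.
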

\begin{proof}
    The assumption (\ref{simplePSA3}) implies that $f^*$ is strictly decreasing, by Lemma \ref{decreasingLem}. Further, $f$ and $f^*$ are equimeasurable. Therefore, for any $t\in I^*$
    \begin{equation}
        \label{simplePS-pf1}
        \mu_f(f^*(t))=\mu_{f^*}(f^*(t))=t.
    \end{equation}
    Take an arbitrary finite family of disjoint intervals $[a_k,b_k]\subset I^*~~(1\le k\le n)$ and consider
    $$
    S:=\sum_{k=1}^n\frac{(\osc(f^*;(a_k,b_k))^p}{(b_k-a_k)^{\alpha p}}=\sum_{k=1}^n\frac{(f^*(a_k)-f^*(b_k))^p}{(b_k-a_k)^{\alpha p}}.
    $$
     We shall find disjoint intervals $I_k~~(1\le k\le n)$ such that
    \begin{equation}
        \label{ineq1}
        \frac{(f^*(a_k)-f^*(b_k))^p}{(b_k-a_k)^{\alpha p}}\le\frac{\osc(f;I_k)^p}{|I_k|^{\alpha p}}\quad(1\le k\le n).
    \end{equation}
    Hence $S\le\mathcal{V}_p^\alpha(f;I)$ and  (\ref{simplePS-ineq}) follows.
    Fix $k\in\{1,2,\ldots,n\}$. Define
    \begin{equation}
        \nonumber
        M_k=\{x\in I: f^*(a_k)>f(x)>f^*(b_k)\}.
    \end{equation}
    Note that the sets $M_k$ are disjoint. Further, since 
    \begin{equation}
        \nonumber
        M_k=\{x\in I:f(x)> f^*(b_k)\}\setminus\{x\in I:f(x)\ge f^*(a_k)\}
    \end{equation}
    we have
    \begin{equation}
        \nonumber
        |M_k|=\mu_f(f^*(b_k))-\left(\mu_f(f^*(a_k))+|\{x\in I:f(x)=f^*(a_k)\}|\right)
    \end{equation}
    By (\ref{simplePSA3}), $|\{x\in I:f(x)=f^*(a_k)\}|=0$. Therefore, by (\ref{simplePS-pf1})
    $$
        |M_k|=\mu_f(f^*(b_k))-\mu_f(f^*(a_k))=b_k-a_k
    $$
    Since $f$ is continuous, we may invoke Lemma \ref{mainLemma} to obtain an open interval $I_k\subset M_k$ such that
    \begin{equation}
        \nonumber
        f^*(a_k)-f^*(b_k)={\rm osc}(f;I_k).
    \end{equation}
    Since $I_k\subset M_k$, there also holds $|I_k|\le b_k-a_k$, so
    \begin{equation}
        \nonumber
        \frac{(f^*(a_k)-f^*(b_k))^p}{(b_k-a_k)^{\alpha p}}\le\frac{\osc(f;I_k)^p}{|I_k|^{\alpha p}}.
    \end{equation}
    This concludes the proof.
\end{proof}

We shall use the following perturbation lemma. It was proved in \cite{BaLi26b}.
\begin{lem}
    \label{LipschitzApprox}
    Let $f\in C(I)$. For any $\epsilon>0$ there exists $g_\epsilon\in{\rm Lip}(I)$ such that $|g_\epsilon|_{{\rm Lip}(I)}<\epsilon$ and for every $y\in\mathbb{R}$ there holds
    \begin{equation}
        \nonumber
        |\{x\in I:f(x)+g_\epsilon(x)=y\}|=0.
    \end{equation}
\end{lem}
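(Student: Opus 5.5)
The idea is to perturb $f$ by a linear function and pick the slope with a counting argument. Concretely, I would look for $g_\epsilon$ in the one-parameter family $g_c(x)=cx$ with $0<c<\epsilon$. Each such $g_c$ is Lipschitz with $|g_c|_{{\rm Lip}(I)}=c<\epsilon$, so it suffices to show that all but countably many $c\in(0,\epsilon)$ are \emph{good}. Here $c$ is called good if $|\{x\in I:f(x)+cx=y\}|=0$ for every $y\in\mathbb{R}$. Since $(0,\epsilon)$ is uncountable, a good $c$ then exists. Continuity of $f$ is used only to guarantee that the level sets are measurable (indeed closed).

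For $c\in\mathbb{R}$ and $y\in\mathbb{R}$, put
$$
L_{c,y}=\{x\in I:f(x)+cx=y\}.
$$
Let $\mathcal{F}$ be the family of pairs $(c,y)$ with $|L_{c,y}|>0$. The key observation is that distinct pairs in $\mathcal{F}$ give sets that meet in a null set.
\begin{enumerate}
\item If $c=c'$ and $y\neq y'$, then $L_{c,y}\cap L_{c',y'}=\emptyset$, since $f(x)+cx$ cannot take two values at once.
\item If $c\neq c'$ and $x\in L_{c,y}\cap L_{c',y'}$, subtracting the two defining equations gives $(c-c')x=y-y'$. Hence the intersection contains at most one point and is null.
\end{enumerate}

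Next I would show that a family of measurable subsets of $I$ with positive measure and pairwise null intersections is at most countable. Fix $m\in\mathbb{N}$ and take any $N$ distinct members $A_1,\dots,A_N$ of the family with $|A_j|>1/m$. Since pairwise intersections are null, $\sum_j\chi_{A_j}\le1$ almost everywhere. Integrating gives $N/m<\sum_j|A_j|\le|I|$, so there are at most $m|I|$ members with measure exceeding $1/m$. Taking the union over $m$ shows the family is countable. Applied to $\{L_{c,y}:(c,y)\in\mathcal{F}\}$, this makes $\mathcal{F}$ countable, and so is its projection to the first coordinate, which is exactly the set of bad slopes $c$.

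Finally, choose $c\in(0,\epsilon)$ outside this countable set and set $g_\epsilon(x)=cx$. Then $g_\epsilon\in{\rm Lip}(I)$ with $|g_\epsilon|_{{\rm Lip}(I)}=c<\epsilon$, and every level set of $f+g_\epsilon$ is null, as required. The only step that needs real care is the countability argument, in particular checking that different slopes produce almost disjoint level sets. The linear perturbation is what makes that step trivial, and I expect no genuine obstacle beyond it.
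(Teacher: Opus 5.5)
Your proof is correct and complete. The paper gives no argument for this lemma; it only cites \cite{BaLi26b}, so there is no in-paper route to compare yours against, and your proof works as a self-contained replacement.

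The two key points both hold.
\begin{enumerate}
\item For distinct pairs $(c,y)\neq(c',y')$, the level sets of $f+cx$ and $f+c'x$ meet in at most one point.
\item In a finite measure space, at most $m|I|$ pairwise almost-disjoint sets can each have measure $>1/m$. Hence only countably many slopes are bad.
\end{enumerate}

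Your route also buys more than the lemma asks for:
\begin{enumerate}
\item It needs only measurability of $f$, not continuity.
\item It shows that all but countably many $c\in(0,\epsilon)$ work.
\item It gives an explicit affine perturbation.
\end{enumerate}

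One further remark is worth recording. Take $g_\epsilon(x)=c(x-a)$ instead of $cx$; adding a constant only relabels the level sets, so the null-level-set property is unaffected. Then $0\le g_\epsilon\le \epsilon(b-a)$ on $I$. This supplies two properties that the proof of Theorem \ref{mainTeo} silently uses but that the lemma as stated does not provide:
\begin{enumerate}
\item $g_\epsilon+\epsilon\ge 0$, which is needed for the claim $f_\epsilon\ge 0$.
\item $\|g_\epsilon\|_\infty\to 0$, which is needed for $f_\epsilon\to f$ uniformly.
\end{enumerate}
A bound on the Lipschitz seminorm alone gives neither, since constants have seminorm zero.
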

Essentially, the previous lemma asserts that any continuous function $f$ can be perturbed by an arbitrarily small function $g_\epsilon\in{\rm Lip}(I)$ so that the result has no level set of positive measure.

\begin{proof}[Proof of Theorem \ref{mainTeo}]
The case $\alpha=0$ and $1\le p<\infty$ was proved in Proposition \ref{verySimplePS}. Assume that $p>1$ and $\alpha>0$. In the case of positive smoothness, we assume without loss of generality that $f\in C(I)$, since $\mathcal{BV}_p^\alpha(I)\hookrightarrow{\rm Lip}_\alpha(I)$ by Proposition \ref{contProp}. As before, we may also assume $f\ge0$.
Let $\epsilon>0$ be arbitrary and take $g_\epsilon$ as given by Lemma \ref{LipschitzApprox}. 
Define
\begin{equation}
    \nonumber
    f_\epsilon(x)=f(x)+g_\epsilon(x)+\epsilon,
\end{equation}
then $f_\epsilon$ is continuous and $f_\epsilon(x)\ge g_\epsilon(x)+\epsilon\ge0$ for all $x\in I$.
By Lemma \ref{LipschitzApprox}, $f_\epsilon$ additionally satisfies (\ref{simplePSA3}). Hence, by Proposition \ref{simplePS}, 
\begin{equation}
    \label{mainTeoEq1}
    \mathcal{V}_p^\alpha(f^*_\epsilon;I^*)\le\mathcal{V}_p^\alpha(f_\epsilon;I).
\end{equation}
Furthermore, since $|g_\epsilon|_{{\rm Lip}(I)}<\epsilon$, we have $\osc(g_\epsilon;J)\le \epsilon|J|$ for any interval $J\subset I$. Observe also that $p-p\alpha\ge1$, whence
\begin{equation}
    \label{mainTeoEq2}
    \mathcal{V}_p^\alpha(g_\epsilon;I)\le\sup\left(\sum_{I_j}\epsilon^p|I_j|^{p-\alpha p}\right)^{1/p}\le\epsilon(b-a)^{1/p}.
\end{equation}
By (\ref{mainTeoEq1}) and (\ref{mainTeoEq2}),
\begin{equation}
    \nonumber
    \mathcal{V}_p^\alpha(f_\epsilon^*;I^*)\le \mathcal{V}_p^\alpha(f_\epsilon;I)\le\mathcal{V}_p^\alpha(f;I)+\mathcal{V}_p^\alpha(g_\epsilon;I)\le\mathcal{V}_p^\alpha(f;I)+\epsilon(b-a)^{1/p}.
\end{equation}
As $\epsilon\rightarrow0$, $f_\epsilon\rightarrow f$ uniformly. Using the well-known inequality $\|f^*_\epsilon-f^*\|_\infty\le\|f_\epsilon-f\|_\infty$, it follows that $f^*_\epsilon\rightarrow f^*$ uniformly. The proof is concluded by invoking (\ref{semicontinuity}):
$$
\mathcal{V}_p^\alpha(f^*;I^*)\le\liminf_{\epsilon\rightarrow0}\mathcal{V}_p^\alpha(f^*_\epsilon;I^*)\le
\liminf_{\epsilon\rightarrow0}\left(\mathcal{V}_p^\alpha(f;I)+\epsilon(b-a)^{1/p}\right)=
\mathcal{V}_p^\alpha(f;I).
$$
This concludes the proof of Theorem \ref{mainTeo}.
\end{proof}

\section{Additional results}
\label{AddComm}

In this section, we discuss some additional results and applications of Theorem \ref{mainTeo} and the methods used to prove it.

\subsection{Nowhere differentiable functions}

The classical P\'{o}lya-Szeg\H{o} principle (\ref{polya-szego-classical}) is useful only if $f\in W^{1,p}(I)~~(p\ge1)$. Theorem 1.2, however, applies to functions possessing a suitable degree of fractional smoothness, namely, functions belonging to $\mathcal{BV}_p^\alpha(I)$ for some $\alpha>0$. In this subsection, we illustrate this using two classical nondifferentiable functions.

Define
\begin{equation}
    \nonumber
    \psi(x)=\min_{z\in\mathbb{Z}}|x-z|
\end{equation}
and take $b\in\mathbb{N}, b\ge2$. The 
\emph{Takagi-van der Waerden function} is defined by
\begin{equation}
        \nonumber
        W(x)=\sum_{k=1}^\infty\frac{\psi(b^kx)}{b^k}.
\end{equation}
It is well-known that $W\in{\rm Lip}_\alpha([0,1])$ for every $\alpha\in(0,1)$ (see e.g. \cite{ShSa90}). At the same time, $W$ is nowhere differentiable. Fix $p\in(1,\infty)$ and let $\alpha\in(0,1/p')$. Since $\alpha+1/p<1$, we have $W\in{\rm Lip}_{\alpha+1/p}([0,1])$. Therefore, by Proposition \ref{contProp} 
\begin{equation}
    \nonumber
    \mathcal{V}_p^\alpha(W;[0,1])<\infty
\end{equation}
for any $1<p<\infty$ and any $\alpha\in[0,1/p')$. By Theorem \ref{mainTeo}, 
\begin{equation}
    \nonumber
    \mathcal{V}_p^\alpha(W^*;[0,1])\le \mathcal{V}_p^\alpha(W;[0,1]),
\end{equation}
which is a fractional variant of the P\'{o}lya-Szeg\H{o} inequality for $W$.

Another interesting case is \emph{Riemann's ''nondifferentiable'' function} $R$ defined as
\begin{equation}
    \label{riemann}
    R(x)=\sum_{n=1}^\infty\frac{\sin(\pi n^2x)}{n^2}.
\end{equation}
According to Weierstrass, Riemann suggested that (\ref{riemann}) is an example of a continuous and nowhere differentiable function. As it turns out, the smoothness properties of $R$ are quite involved, see the works cited in \cite{Li26}. Variational properties of $R$ were investigated in \cite{Li26}, where we proved that
\begin{equation}
    \label{p-var-R}
    R\notin\mathcal{BV}_p([0,2])\text{ for }p<4/3\quad\text{and}\quad R\in\mathcal{BV}_p([0,2])\text{ for }p>4/3.
\end{equation}
Further, it is simple to verify that $R\in{\rm Lip}_{1/2}([0,2])$ (see \cite{Li26}). We now have
\begin{equation}
    \nonumber
    \mathcal{V}_p^\alpha(R^*;[0,2])\le\mathcal{V}_p^\alpha(R;[0,2])
\end{equation}
whenever $(p,\alpha)$ satisfies
\begin{equation}
    \label{parameterChoice}
    p>\frac{4}{3}\quad\text{and}\quad0\le \alpha<\frac{1}{2}-\frac{2}{3p}.
\end{equation}
Indeed, take any $p>4/3$ and $\alpha<1/2-2/(3p)$.
We shall prove that $R\in\mathcal{BV}_p^\alpha([0,2])$. Define $p_0=p-2p\alpha$, then $4/3<p_0<p$. Since $p_0>4/3$, we have $R\in\mathcal{BV}_{p_0}([0,2])$. Furthermore, since $R\in{\rm Lip}_{1/2}([0,2])$, it follows that $\osc(R;J)\le C|J|^{1/2}$ for any $J\subset[0,2]$. Thus,
    \begin{eqnarray}
    \nonumber
    \left(\sum_k\frac{\osc(R;I_k)^p}{|I_k|^{\alpha p}}\right)^{1/p}&\le&\sup_k\frac{\osc(R;I_k)^{1-p_0/p}}{|I_k|^{\alpha}}\left(\sum_k\osc(R;I_k)^{p_0}\right)^{1/p}\\
    \nonumber
    &\le&C\sup_k|I_k|^{1/2-p_0/(2p)-\alpha}\mathcal{V}_{p_0}(R;[0,2])^{p_0/p}\\
    \nonumber
    &=&C\mathcal{V}_{p_0}(R;[0,2])^{p_0/p}<\infty,
    \end{eqnarray}
where we used $p_0/(2p)=1/2-\alpha$.

We do not claim that the parameter range for $\alpha$ in (\ref{parameterChoice}) is sharp. However, as $p\rightarrow4/3+$, the smoothness $\alpha$ must tend to 0, as it does in (\ref{parameterChoice}). In fact, for $p=4/3$ there is an easy argument to show that $R\notin\mathcal{BV}^\beta_{4/3}([0,2])$ for any $\beta>0$. Indeed, it was proved in \cite{KoLi12} that for $1<p<q<\infty$, the embedding
\begin{equation}
    \label{embedding}
    \mathcal{BV}_q^{1/p-1/q}(I)\hookrightarrow\mathcal{BV}_p(I)
\end{equation}
holds. Let $q=4/3$ and fix $\beta\in(0,1/4)$. (The argument can be modified to handle $\beta\ge1/4$ as well.) Choose $p>1$ such 
that
\begin{equation}
    \nonumber
    \beta+\frac{3}{4}=\frac{1}{p}.
\end{equation}
Then $\beta=1/p-1/q$, and hence, by (\ref{embedding})
\begin{equation}
    \nonumber
    R\in\mathcal{BV}_{4/3}^\beta([0,2])\quad\Rightarrow\quad R\in\mathcal{BV}_p([0,2]).
\end{equation}
Since $\beta>0$, we have $1/p>3/4$, or equivalently, $p<4/3$. This contradicts (\ref{p-var-R}), and hence $R\notin\mathcal{BV}_{4/3}^\beta([0,2])$ for any $\beta>0$. The interesting question of whether $R\in\mathcal{BV}_{4/3}([0,2])$ was left open in \cite{Li26}.

\subsection{Moduli of continuity}
For a function $f\in L^p(I)$, the \emph{$L^p$ modulus of continuity} of $f$ is defined by
\begin{equation}
    \nonumber
    \omega(f;\delta)_p=\sup_{0<h\le\delta}\left(\int_a^{b-h}|f(x+h)-f(x)|^p\mathrm{d}x\right)^{1/p}.
\end{equation}
Ul'yanov posed the problem of estimating the modulus of continuity of the nonincreasing rearrangement $f^*$ in terms of that of $f$. In a series of papers, Oswald, Wik, and Brudnyi independently proved that there exists a constant $\gamma_p\ge1$ such that the inequality 
\begin{equation}
    \label{intMod1}
    \omega(f^*;\delta)_p\le \gamma_p\omega(f;\delta)_p
\end{equation}
holds for $p\in[1,\infty)$ and $\delta\in(0,|I|/2]$. We refer the reader to \cite[Section 4.2]{Ko07} for a survey of these and related results. To the best of our knowledge, the optimal value of the constant $\gamma_p$ remains unknown. We shall discuss an analogue of (\ref{intMod1}) for a moduli of continuity defined in terms of $p$-variation. Given any family $\mathcal{I}=\{I_k\}$ of disjoint intervals contained in $I$, denote by 
\begin{equation}
    \nonumber
    \|\mathcal{I}\|=\sup_j|I_j|.
\end{equation}
The \emph{modulus of $p$-continuity} of $f$ is defined by
\begin{equation}
    \label{pmodDef}
    \omega_{1-1/p}(f;\delta)=\sup_{\|\mathcal{I}\|\le\delta}\left(\sum_{I_j\in\mathcal{I}}\osc(f;I_j)^p\right)^{1/p}\quad(0<\delta\le|I|).
\end{equation}
The definition (\ref{pmodDef}) was given by Terekhin \cite{Te67}.
For $p>1$, the quantity $\omega_{1-1/p}(f;\delta)$ may tend to 0 as $\delta\rightarrow0$ for non-trivial continuous functions. 

We shall show that the non-increasing rearrangement does not increase $\omega_{1-1/p}(f;\delta)$. In contrast to (\ref{intMod1}), the corresponding sharp constant is 1.
\begin{prop}
    Assume that $1<p<\infty$ and $f\in C(I)$. Then for any $\delta\in(0,|I|]$, there holds
    \begin{equation}
        \label{pModIneq}
        \omega_{1-1/p}(f^*;\delta)\le\omega_{1-1/p}(f;\delta).
    \end{equation}
\end{prop}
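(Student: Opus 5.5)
I will follow the same scheme as the proof of Theorem \ref{mainTeo}: first a reduction to $f\ge0$, then the case of a function with no level sets of positive measure, and finally a perturbation argument.

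\emph{Reduction.} Since $\osc(|f|;E)\le\osc(f;E)$, we have $\omega_{1-1/p}(|f|;\delta)\le\omega_{1-1/p}(f;\delta)$. Together with $f^*=|f|^*$, this lets us assume $f\ge0$ and $f\in C(I)$.

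\emph{Special case.} Assume in addition that (\ref{simplePSA3}) holds, so that $f^*$ is strictly decreasing by Lemma \ref{decreasingLem}. Take any finite family of disjoint intervals $[a_k,b_k]\subset I^*$ with $b_k-a_k\le\delta$. As in the proof of Proposition \ref{simplePS}, define
\[
M_k=\{x\in I: f^*(b_k)<f(x)<f^*(a_k)\}.
\]
These sets are pairwise disjoint and satisfy $|M_k|=b_k-a_k\le\delta$. Lemma \ref{mainLemma} then gives an open interval $I_k\subset M_k$ with $\osc(f;I_k)=f^*(a_k)-f^*(b_k)=\osc(f^*;[a_k,b_k])$. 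The intervals $I_k$ are disjoint, and $|I_k|\le|M_k|\le\delta$, so $\{I_k\}$ is admissible in (\ref{pmodDef}). Summing the $p$-th powers and taking the supremum gives (\ref{pModIneq}). Only the length constraint is new compared with Proposition \ref{simplePS}, and it is handled by the same inclusion $I_k\subset M_k$. One technicality: $f^*$ may be discontinuous at the endpoints $0$ and $|I|$, so $\osc(f^*;J)$ on closed intervals should be treated with care. I would restrict to continuous $f$, where $f^*$ is continuous, so that open and closed intervals give the same oscillation.

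\emph{General case.} For continuous $f\ge0$, set $f_\epsilon=f+g_\epsilon+\epsilon$ with $g_\epsilon$ from Lemma \ref{LipschitzApprox}. Then the special case gives $\omega_{1-1/p}(f_\epsilon^*;\delta)\le\omega_{1-1/p}(f_\epsilon;\delta)$. By Minkowski's inequality in $\ell^p$,
\[
\omega_{1-1/p}(f_\epsilon;\delta)\le\omega_{1-1/p}(f;\delta)+\omega_{1-1/p}(g_\epsilon;\delta).
\]
For the error term, $\osc(g_\epsilon;J)\le\epsilon|J|$ and $\sum|I_j|^p\le\sum|I_j|\le|I|$ once $|I_j|\le\delta\le|I|$ is normalized; in general $\sum|I_j|^p\le\delta^{p-1}|I|$. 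Hence $\omega_{1-1/p}(g_\epsilon;\delta)\le\epsilon\,\delta^{1-1/p}|I|^{1/p}\to0$.

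\emph{Passage to the limit.} We have $\|f_\epsilon^*-f^*\|_\infty\le\|f_\epsilon-f\|_\infty\to0$. It remains to check that $\omega_{1-1/p}(\cdot;\delta)$ is lower semicontinuous under pointwise convergence. This follows exactly as in Lemma \ref{semicontinuityLem}: for a fixed finite admissible family, $\osc(f^*;I_j)\le\liminf_\epsilon\osc(f^*_\epsilon;I_j)$, and the constraint $\|\mathcal{I}\|\le\delta$ does not depend on the function. Letting $\epsilon\to0$ yields (\ref{pModIneq}).

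The main point requiring care is the special case: disjointness of the $I_k$ and the length bound $|I_k|\le\delta$. Both follow from $|M_k|=b_k-a_k$, which is where assumption (\ref{simplePSA3}) is essential.

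Sharpness of the constant $1$ is not needed for the inequality itself. If wanted, it follows by taking $f$ non-increasing, so that $f^*=f$ and equality holds.
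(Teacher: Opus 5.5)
Your proposal is correct and follows the paper's own sketched argument: the no-level-set case via the sets $M_k$, the identity $|M_k|=b_k-a_k$ and Lemma \ref{mainLemma} (giving $|I_k|\le b_k-a_k\le\delta$), then the perturbation from Lemma \ref{LipschitzApprox} and the $\delta$-restricted version of Lemma \ref{semicontinuityLem}; your estimate $\omega_{1-1/p}(g_\epsilon;\delta)\le\epsilon\,\delta^{1-1/p}|I|^{1/p}$ fills in a step the paper leaves implicit. One small point, which the paper's proof of Theorem \ref{mainTeo} also glosses over: Lemma \ref{LipschitzApprox} only bounds the Lipschitz seminorm of $g_\epsilon$, so you should normalize $g_\epsilon(a)=0$ (subtracting a constant preserves both properties of the lemma); this gives $\|g_\epsilon\|_\infty\le\epsilon|I|$, which is what actually yields $f_\epsilon\to f$ uniformly, and you should add $\epsilon|I|$ rather than $\epsilon$ if you want $f_\epsilon\ge0$.
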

\begin{proof}
    We only sketch the proof; the argument is very similar to the proof of Proposition \ref{simplePS}.
    Assume first that $f$ satisfies (\ref{simplePSA3}). Fix $\delta>0$ and take arbitrary disjoint intervals $(a_k,b_k)\subset I^*~~(1\le k\le n)$ with $b_k-a_k\le\delta$ for $1\le k\le n$. Consider
    \begin{equation}
        \nonumber
        S=\sum_{k=1}^n(f^*(a_k)-f^*(b_k))^p.
    \end{equation}
    In the same fashion as in the proof of Proposition \ref{simplePS}, we obtain disjoint intervals $\{I_k:1\le k\le n\}$ contained in $I$ such that
    \begin{equation}
        \nonumber
        f^*(a_k)-f^*(b_k)=\osc(f;I_k),
    \end{equation}
    and $|I_k|\le b_k-a_k\le\delta$. Consequently,
    \begin{equation}
        \nonumber
        S=\sum_{k=1}^n\osc(f;I_k)^p\le\omega_{1-1/p}(f;\delta)^p.
    \end{equation}
    The assumption (\ref{simplePSA3}) is removed by combining Lemma \ref{LipschitzApprox} with a straightforward modification of (\ref{semicontinuity}).
\end{proof}

\subsection{A nonlinear Fredholm integral equation}

In \cite{EnHoZe93}, the authors study the class of nonlinear Fredholm integral equations of the first kind
\begin{equation}
    \label{fredholm}
    \int_0^1k(s,x(t))\mathrm{d}t=y(s)\quad s\in[0,1].
\end{equation}
A characteristic feature of (\ref{fredholm}) is that, for a given right-hand side $y$, the solutions $x(t)~~(0\le t\le1)$ are in general not unique. To address this issue, the authors observe that whenever $x$ solves (\ref{fredholm}), so does the non-increasing rearrangement $x^*$. Furthermore, conditions are identified in order for injectivity of the operator defined by (\ref{fredholm}) in the class of monotone function. Combining this observation with the fact that $x\in X\Rightarrow x^*\in X$  for both $X=C([0,1])$ and $X=W^{1,p}([0,1])~~(p\ge1)$,
they obtain a canonical representative of each rearrangement class of solutions. In this way, they establish a natural notion of uniqueness modulo rearrangements of solutions to (\ref{fredholm}) in both $C([0,1])$ and $W^{1,p}([0,1])~~(p\ge1)$. By Theorem \ref{mainTeo}, the same strategy applies to $\mathcal{BV}_p^\alpha([0,1])$ for $1\le p<\infty$ and $0\le\alpha\le 1/p'$.  This provides additional spaces in which one could assign canonical solutions to \eqref{fredholm} while retaining uniqueness modulo rearrangements. 

\bibliographystyle{plain}
\bibliography{ref2}
\end{document}